\documentclass[10pt]{amsart}
\usepackage{amssymb,amsmath,amsthm,amsfonts}

\newtheorem{theorem}{Theorem}[section]

\newtheorem{definition}[theorem]{Definition}
\newtheorem{proposition}[theorem]{Proposition}
\newtheorem{corollary}[theorem]{Corollary}

\newtheorem{example}[theorem]{Example}

\begin{document}

\title{On Baire classification of strongly separately continuous functions}

\author{Olena Karlova}

\maketitle

\begin{abstract}
We investigate strongly separately continuous functions on a product of topological spaces and prove that if $X$ is a countable product of real lines, then there exists a strongly separately continuous function $f:X\to\mathbb R$ which is not Baire measurable. We show that if  $X$ is a product of normed spaces $X_n$,  $a\in X$ and $\sigma(a)=\{x\in X:|\{n\in\mathbb N: x_n\ne a_n\}|<\aleph_0\}$ is a subspace of $X$, equipped with the Tychonoff topology, then for any open set $G\subseteq \sigma(a)$ there is a strongly separately continuous function $f:\sigma(a)\to \mathbb R$ such that the discontinuity point set of $f$ is equal to~$G$.
\end{abstract}

\section{Introduction.}

In 1998 Omar Dzagnidze~\cite{Dzagnidze} introduced a notion of  a strongly separately continuous function $f:\mathbb R^n\to\mathbb R$. Namely, he calls a function $f$ {\it strongly separately continuous} at a point $x^0=(x_1^0,\dots,x_n^0)\in\mathbb R^n$ if the equality
\begin{gather*}
 \lim\limits_{x\to x^0}|f(x_1,\dots,x_k,\dots,x_n)-f(x_1,\dots,x_k^0,\dots,x_n)|=0
\end{gather*}
 holds for every $k=1,\dots,n$. Dzagnidze proved that a function \mbox{$f:\mathbb R^n\to\mathbb R$} is strongly separately continuous at $x^0$ if and only if $f$ is continuous at $x^0$.

Extending these investigations, J.~\v{C}in\v{c}ura, T.~\v{S}al\'{a}t and T.~Visnyai \cite{CSV} consider strongly separately continuous functions defined on the space $\ell_2$ of sequences $x=(x_n)_{n=1}^\infty$ of real numbers such that $\sum\limits_{n=1}^\infty x_n^2<+\infty$ endowed with the standard metric $d(x,y)=(\sum\limits_{n=1}^\infty (x_n-y_n)^2)^{1/2}$. In particular, the authors gave an example of a strongly separately continuous everywhere discontinuous function $f:\ell_2\to\mathbb R$.

Recently, T.~Visnyai in \cite{TV} continued to study properties of strongly separately continuous functions on $\ell_2$ and constructed a strongly separately continuous function $f:\ell_2\to\mathbb R$ which belongs to the third Baire class and is not quasi-continuous at every point. Moreover, T.~Visnyai gave a sufficient condition for strongly separately continuous function to be continuous on $\ell_2$.

In this paper we study strongly separately continuous functions defined on a subspaces of a product $\prod\limits_{t\in T} X_t$ of topological spaces $X_t$ equipped with the Tychonoff topology of pointwise convergence.  We show that if $X$ is a product of a sequence $(X_n)_{n=1}^\infty$ of topological spaces $X_n$, $a\in X$ and $\sigma(a)=\{x\in X:|\{n\in\mathbb N: x_n\ne a_n\}|<\aleph_0\}$ is a subspace of $X$, equipped with the Tychonoff topology, then every strongly separately continuous function $f:\sigma(a)\to\mathbb R$ belongs to the first stable Baire class. Moreover, we prove that if $X$ is a countable product of real lines, then there exists a strongly separately continuous function $f:X\to\mathbb R$ which is not Baire measurable. In the last section we show that if  $X$ is a product of normed spaces, then for any open set $G\subseteq \sigma(a)$ there is a strongly separately continuous function $f:\sigma(a)\to \mathbb R$ such that the discontinuity point set of $f$ is equal to~$G$.

 \section{Strongly separately continuous functions and $\mathcal S$-open sets}

Let $X=\prod\limits_{t\in T} X_t$ be a product of a family of sets $X_t$ with $|X_t|>1$ for all $t\in T$.
If $S\subseteq S_1\subseteq T$, $a=(a_t)_{t\in T}\in X$, $x=(x_t)_{t\in S_1}\in \prod\limits_{t\in S_1}X_t$, then we denote by $a_S^x$ a point $(y_t)_{t\in T}$, where
$$
y_t=\left\{\begin{array}{ll}
             x_t, & t\in S, \\
             a_t, & t\in T\setminus S.
           \end{array}
\right.
$$
In the case $S=\{s\}$ we shall write $a_s^x$ instead of $a_{\{s\}}^x$.

If $n\in\mathbb N$, then we set
\begin{gather*}
\sigma_n(x)=\{y=(y_t)_{t\in T}\in X: |\{t\in T:y_t\ne x_t\}|\le n\}
\end{gather*}
and
$$
\sigma(x)=\bigcup\limits_{n=1}^\infty \sigma_n(x).
$$
Each of the sets of the form $\sigma(x)$ for an $x\in X$ is called {\it a $\sigma$-product of the space $X$}.

We denote by $\tau$ the Tychonoff topology on a product $X=\prod\limits_{t\in T}X_t$ of topological spaces $X_t$. If $X_0\subseteq X$, then the symbol $(X_0,\tau)$ means the subspace $X_0$ equipped with  the Tychonoff topology induced from $(X,\tau)$.

If $X_t=Y$ for all $t\in T$ then the product $\prod\limits_{t\in T}X_t$ we also denote by $Y^{\mathfrak m}$, where $\mathfrak m=|T|$.

A set $E\subseteq \prod\limits_{t\in T} X_t$ is called {\it $\mathcal S$-open} if
$$
\sigma_1(x)\subseteq E
$$ for all $x\in E$.

Let $\mathcal S(X)$ denote the collection of all $\mathcal S$-open subsets of $X$. We notice that $\mathcal S(X)$ is a topology on $X$. We will denote by  $(X,\mathcal S)$ the product $X=\prod\limits_{t\in T}X_t$ equipped with the topology $\mathcal S(X)$.

The next properties follow easily from the definitions.

\begin{proposition}\label{prop:simple_prop_s-open} Let $X=\prod\limits_{t\in T}X_t$, $|X_t|>1$ for all $t\in T$  and $E\subseteq X$. Then
\begin{enumerate}
  \item \label{S-Clopen} $E\in\mathcal S(X)$ if and only if $X\setminus E \in \mathcal S(X)$;

  \item\label{pr:union-snow}  $E\in\mathcal S(X)$ if and only if $E=\bigcup\limits_{x\in E}\sigma(x)$;

  \item\label{prop:componenta} if $x\in X$, then $\sigma(x)$ is the smallest $\mathcal S$-open set which contains $x$;

  \item\label{s_open_dense}  if $E\in \mathcal S(X)$, then $E$ is dense in $(X,\tau)$.

  \item\label{non-trivial}  there exists a non-trivial $\mathcal S$-open subset of $X$ if and only if  $|T|\ge\aleph_0$.
\end{enumerate}
\end{proposition}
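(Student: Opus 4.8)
The plan is to organize the whole proposition around a single observation: the binary relation $y\in\sigma_1(x)$ is symmetric, and the relation $x\approx y$ defined by $|\{t\in T:x_t\ne y_t\}|<\aleph_0$ is an equivalence relation whose classes are exactly the sets $\sigma(x)$. Granting this, an $\mathcal S$-open set turns out to be precisely a union of $\approx$-classes, and all five statements become routine. I would therefore prove item (2) first, since it is the engine for (1), (3) and (4).

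For (2), the inclusion $E\subseteq\bigcup_{x\in E}\sigma(x)$ is immediate because $x\in\sigma(x)$. The substance is the reverse inclusion when $E$ is $\mathcal S$-open, i.e.\ that $x\in E$ forces $\sigma(x)\subseteq E$. I would prove $\sigma_n(x)\subseteq E$ by induction on $n$: the case $n=1$ is the definition of $\mathcal S$-openness, and for the step I would take $y\in\sigma_{n+1}(x)$ differing from $x$ in coordinates $t_1,\dots,t_{n+1}$, split off one coordinate by letting $z$ agree with $y$ everywhere except at $t_{n+1}$ where $z_{t_{n+1}}=x_{t_{n+1}}$, and observe that $z\in\sigma_n(x)\subseteq E$ by the inductive hypothesis while $y\in\sigma_1(z)\subseteq E$ by $\mathcal S$-openness. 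This single-coordinate decomposition is the crux of the argument. The converse direction of (2) is trivial since $\sigma_1(x)\subseteq\sigma(x)$.

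Item (3) then follows: $\sigma(x)$ is $\mathcal S$-open because any $z\in\sigma_1(y)$ with $y\in\sigma(x)$ still differs from $x$ in finitely many coordinates, and minimality is exactly the reverse inclusion established in (2). For (1), I would use the symmetry of $\sigma_1$: if $E$ is $\mathcal S$-open and $x\in X\setminus E$, then no $y\in\sigma_1(x)$ can lie in $E$, for otherwise $x\in\sigma_1(y)\subseteq E$; hence $\sigma_1(x)\subseteq X\setminus E$, and applying this to $X\setminus E$ in place of $E$ gives the equivalence.

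For (4) I would fix a nonempty $\mathcal S$-open set $E$ (the empty set being the harmless exception to the density claim) and a nonempty basic $\tau$-open set $U$ depending on finitely many coordinates $t_1,\dots,t_k$; choosing any $x\in E$ and altering it only on those coordinates to meet $U$ produces a point of $\sigma_k(x)\subseteq\sigma(x)\subseteq E$ lying in $U$. For (5), if $T$ is finite then every point differs from $x$ in at most $|T|<\aleph_0$ coordinates, so $\sigma(x)=X$ and by (2) no proper nonempty $\mathcal S$-open set exists; if $T$ is infinite, using $|X_t|>1$ I would pick $b_t\ne a_t$ for all $t$ to obtain a point outside $\sigma(a)$, showing $\sigma(a)$ is a non-trivial $\mathcal S$-open set. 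The only genuine obstacle is the induction in (2); once the one-coordinate-at-a-time decomposition is in hand, the remaining items are bookkeeping built on symmetry and the equivalence-class picture.
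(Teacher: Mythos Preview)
Your proposal is correct. The paper does not actually give a proof of this proposition; it simply remarks that ``the next properties follow easily from the definitions,'' so there is nothing to compare against beyond that. Your equivalence-class viewpoint and the one-coordinate-at-a-time induction for item~(2) are exactly the natural way to make those definitions do the work, and the remaining items are, as you say, bookkeeping. Your parenthetical about the empty set in item~(4) is also appropriate: as literally stated in the paper, (4) fails for $E=\emptyset$, and you handle that correctly.
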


It follows from Proposition~\ref{prop:simple_prop_s-open} that $\sigma$-products of two distinct points of $\prod\limits_{t\in T}X_t$ either coincide, or does not intersect. Consequently, the family of all $\sigma$-products of an arbitrary $\mathcal S$-open set $E\subseteq \prod\limits_{t\in T}X_t$ generates a partition of $E$ on  mutually disjoint $\mathcal S$-open sets, which we will call {\it $\mathcal S$-components of $E$.}

\begin{definition}\label{def:sep-cont}
  {\rm  Let $(X_t:t\in T)$ be a family of topological spaces, $Y$ be a topological space and let $E\subseteq \prod\limits_{t\in T}X_t$ be an $\mathcal S$-open set. A mapping $f:E\to Y$ is said to be {\it separately continuous at a point $a=(a_t)_{t\in T}\in E$ with respect to the $t$-th variable} provided that the mapping $g:X_t\to Y$ defined by the rule $g(x)=f(a_t^x)$ for all $x\in X_t$ is continuous at the point $a_t\in X_t$.}
\end{definition}

\begin{definition}
  {\rm Let $E\subseteq\prod\limits_{t\in T}X_t$ be an $\mathcal S$-open set, $\mathcal T$ be a topology on $E$ and let $(Y,d)$ be a metric space. A mapping $f:(E,\mathcal T)\to Y$ is called {\it strongly separately continuous at a point $a\in E$ with respect to the $t$-th variable} if
  $$
\lim_{x\to a}d(f(x),f(x_{t}^a))=0.
  $$}
\end{definition}

\begin{definition}
   {\rm A mapping $f:E\to Y$ is
    \begin{itemize}
      \item {\it (strongly) separately continuous at a point $a\in E$}, if $f$ is (strongly) separately continuous at  $a$ with respect to each variable $t\in T$;

    \item {\it (strongly) separately continuous on the set $E$}, if $f$ is (strongly) separately continuous  at every point $a\in E$ with respect to each variable $t\in T$.
        \end{itemize} }
  \end{definition}

\begin{theorem}\label{prop:strong-top-s}
 Let $E\subseteq\prod\limits_{t\in T}X_t$ be an $\mathcal S$-open set and $(Y,d)$ be a metric space. A mapping \mbox{$f:(E,\mathcal S)\to Y$} is continuous if and only if $f:(E,\mathcal T)\to Y$ is strongly separately continuous for an arbitrary topology $\mathcal T$ on $E$.
\end{theorem}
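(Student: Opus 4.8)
The plan is to reduce both implications to a single observation: a map $f\colon(E,\mathcal S)\to Y$ into a metric space is continuous if and only if it is constant on each $\mathcal S$-component $\sigma(x)$. Indeed, by Proposition~\ref{prop:simple_prop_s-open}(\ref{prop:componenta}) the set $\sigma(a)$ is the smallest $\mathcal S$-open neighbourhood of $a$, so continuity of $f$ at $a$ forces $f(\sigma(a))$ to lie in every ball around $f(a)$; since $Y$ is metric this means $f$ is constant on $\sigma(a)$, and the converse is immediate. I would record this equivalence first, as it drives both directions.

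For the implication ``$\mathcal S$-continuous $\Rightarrow$ strongly separately continuous for every $\mathcal T$'', the key remark is that $x_t^a$ differs from $x$ only in the $t$-th coordinate, hence $x_t^a\in\sigma_1(x)\subseteq\sigma(x)$, so $x$ and $x_t^a$ always lie in the same $\mathcal S$-component. If $f$ is $\mathcal S$-continuous it is constant on that component, whence $d(f(x),f(x_t^a))=0$ for every $x\in E$ and every $t\in T$. The function $x\mapsto d(f(x),f(x_t^a))$ is then identically zero, so its limit at $a$ is $0$ for every topology $\mathcal T$ and every variable $t$; thus $f$ is strongly separately continuous with respect to an arbitrary $\mathcal T$.

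For the converse I would show that if $f$ is strongly separately continuous for every topology on $E$, then $f$ is constant on each $\sigma(a)$. It suffices to prove $f(a)=f(b)$ whenever $b\in\sigma_1(a)$ differs from $a$ in a single coordinate $s$, for then a finite chain of one-coordinate changes joins $a$ to any point of $\sigma(a)$ (all intermediate points lying in $E$ because $E$ is $\mathcal S$-open), and transitivity yields constancy on the whole component. Fix such $a,b,s$ and equip $E$ with the topology
$$
\mathcal T=\{U\subseteq E: a\in U\Rightarrow b\in U\}.
$$
One checks directly that $\mathcal T$ is a topology and that $\{a,b\}$ is the smallest $\mathcal T$-neighbourhood of $a$; in particular $b$ belongs to every $\mathcal T$-neighbourhood of $a$. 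Applying strong separate continuity of $f\colon(E,\mathcal T)\to Y$ at $a$ with respect to the variable $s$, the condition $\lim_{x\to a}d(f(x),f(x_s^a))=0$ forces, since $b$ cannot be avoided in any neighbourhood of $a$, that $d(f(b),f(b_s^a))<\varepsilon$ for all $\varepsilon>0$, i.e.\ $d(f(b),f(b_s^a))=0$. As $b$ agrees with $a$ off the coordinate $s$, we have $b_s^a=a$, so $f(b)=f(a)$, as required.

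The argument is essentially complete once the right topology is written down, so I expect the only genuinely delicate point to be the design of $\mathcal T$ in the converse: choosing a topology in which the chosen neighbour $b$ cannot be separated from $a$, which is exactly what compels strong separate continuity to compare $f(b)$ with $f(a)=f(b_s^a)$. Everything else is the bookkeeping of the minimal-neighbourhood description of $\mathcal S$ and the finite-chain reduction from $\sigma_1$ to $\sigma$.
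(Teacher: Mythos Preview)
Your argument is correct. The necessity direction is essentially identical to the paper's: $\mathcal S$-continuity forces $f$ to be constant on each $\sigma$-component, and since $x$ and $x_t^a$ always share a component, $d(f(x),f(x_t^a))\equiv 0$ regardless of the ambient topology.

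For sufficiency you take a genuinely different route. The paper simply specialises to $\mathcal T=\mathcal S$: because $\sigma(x_0)$ is the smallest $\mathcal S$-neighbourhood of $x_0$, strong separate continuity at $x_0$ in $(E,\mathcal S)$ forces $d(f(x),f(x_t^{x_0}))=0$ for \emph{every} $x\in\sigma(x_0)$ and every $t$, and a finite chain then gives $f\equiv f(x_0)$ on $\sigma(x_0)$. You instead manufacture, for each pair $a,b$ with $b\in\sigma_1(a)$, a bespoke topology $\mathcal T_{a,b}=\{U:a\in U\Rightarrow b\in U\}$ in which $b$ clings to $a$, and read off $f(a)=f(b)$ from strong separate continuity in that one topology; the same chaining then yields constancy on components. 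Both approaches exploit the freedom to choose $\mathcal T$, but the paper needs only a single choice ($\mathcal T=\mathcal S$), whereas your method uses a whole family of auxiliary topologies. Your construction is a nice illustration that the ``for every $\mathcal T$'' hypothesis is extremely strong---one can force any desired point into every neighbourhood of $a$---but the paper's proof is shorter and avoids the extra verification that each $\mathcal T_{a,b}$ is indeed a topology.
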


\begin{proof}
{\it Necessity.} Fix a topology $\mathcal T$ on $E$ and  consider the partition $(\sigma(x_i):i\in I)$ of the set $E$ on $\mathcal S$-components $\sigma(x_i)$. We notice that $f|_{\sigma(x_i)}=y_i$, where $y_i\in Y$ for all $i\in I$, since $f$ is continuous on $(E,\mathcal S)$. Let $a=(a_t)_{t\in T}\in E$ and $t\in T$. If $x\in E$, then $x\in\sigma(x_i)$ for some $i\in I$. Moreover, $x_{t}^a\in\sigma(x_i)$. Then $f(x)=f(x_{t}^a)=y_i$. Hence, $d(f(x),f(x_{t}^a))=0$ for all $x\in E$. Hence, $f$ is strongly separately continuous on $(E,\mathcal T)$.

{\it Sufficiency.} Put $\mathcal T=\mathcal S$. Fix $x_0\in E$ and show that $f$ is continuous at $x_0$ on $(E,\mathcal S)$. Let $x_0\in\sigma(x_i)$ for some $i\in I$. Let us observe that $x\to x_0$ in $(E,\mathcal S)$ if and only if $x\in\sigma(x_0)$. Since $f$ is strongly separately continuous at $x_0$ and $\sigma(x_0)=\sigma(x_i)$,  we have $d(f(x),f(x_{t}^{x_0}))=0$ for all $x\in\sigma(x_i)$ and $t\in T$. Consequently, $f(x)=f(x_0)$ for all $x\in \sigma(x_i)$. Since the set $\sigma(x_i)$ is open in $(E,\mathcal S)$, $f$ is continuous at $x_0$.
\end{proof}

Let $(\sigma_i:i\in I)$ be a partition of $X=\prod\limits_{t\in T}X_t$ on $\mathcal S$-components and
$f:\prod\limits_{t\in T}X_t\to\mathbb R$ be a function such that $f|_{\sigma_i}\equiv {\rm const}$ for all $i\in I$. Theorem~\ref{prop:strong-top-s} implies that $f$ is strongly separately continuous on $(X,\tau)$, since for every $i\in I$ the set $\sigma_i$ is clopen in $(X,\mathcal S)$. The next example shows that it is not so in the case $f|_{\sigma_i}$ is a continuous functions on $(\sigma_i,\tau)$ for every $i\in I$.

\begin{example}
  Let $X=\mathbb R^{\aleph_0}$,  $(\sigma_i:i\in I)$ be a partition of $X$ on $\mathcal S$-components and let $\sigma(m)=\{x=(x_n)\in X:|\{n\in\mathbb N: x_n\ne m\}|<\aleph_0\}$ for all $m\in\mathbb N$. Consider a function
  $f:X\to\mathbb R$,
  $$
  f(x)=\left\{\begin{array}{ll}
                m\cdot(x_1+\dots+x_m), & \mbox{if}\,\, x\in \sigma(m),\\
                0, & \mbox{otherwise}.
              \end{array}
  \right.
  $$
  Then $f|_{\sigma_i}:(\sigma_i,\tau)\to\mathbb R$ is continuous for every $i\in I$, but $f$ is not strongly separately continuous at $x=0$.
\end{example}

\begin{proof}
 For every $m\in\mathbb N$ we put
  $$
  u^m=\Bigl(\mathop{\underbrace{\frac 1m,\dots,\frac 1m}}\limits_{m},m,m,\dots \Bigr).
  $$
  Then $u^m\in\sigma(m)$ and $u^m\to 0$ in $(X,\tau)$. Note that $f(u^m)=m$ and $f((u^m)_1^x)=m-1$. Therefore,
  $|f(u^m)-f((u^m)_1^x)|=1$ for all $m\in\mathbb N$. Consequently, $f$  is not strongly separately continuous at $x=0$ with respect to the first variable.
\end{proof}

\begin{theorem}\label{th:suff-cond}
  Let $E\subseteq \prod\limits_{t\in T}X_t$ be an $\mathcal S$-open subset of a product of topological spaces $X_t$, $(Y,d)$ be a metric space and let $f:(E,\tau)\to Y$ be a strongly separately continuous mapping at the point  $a=(a_t)_{t\in T}\in E$. Then   $f$ is continuous at the point $a$ if and only if
 \begin{gather}
  \forall \varepsilon>0\,\,\,\exists\, T_0\subseteq T, \,\,|T_0|<\aleph_0\nonumber\\
   \exists U \mbox{-- a neighborhood of\,\,\,} a \mbox{\,\,\,in\,\,\,} (E,\mathcal T) \,|\\\label{eq:suff-cond}
d(f(a),f(x_{T_0}^{a}))<\varepsilon \quad \forall\, x\in U.
\nonumber
 \end{gather}
\end{theorem}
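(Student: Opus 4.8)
The plan is to prove the two implications separately, with essentially all the content residing in the sufficiency direction. For the \emph{necessity}, suppose $f$ is continuous at $a$ and fix $\varepsilon>0$. I would simply take $T_0=\emptyset$, so that $x_{T_0}^{a}=x$ for every $x$, and let $U$ be the $\tau$-neighborhood of $a$ furnished by continuity, on which $d(f(a),f(x))<\varepsilon$. Then \eqref{eq:suff-cond} holds verbatim, so this direction carries no real content.

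For the \emph{sufficiency}, fix $\varepsilon>0$ and apply \eqref{eq:suff-cond} with $\varepsilon/2$ in place of $\varepsilon$ to obtain a finite set $T_0=\{t_1,\dots,t_n\}$ and a neighborhood $U$ of $a$ with
\[
d\bigl(f(a),f(x_{T_0}^{a})\bigr)<\tfrac{\varepsilon}{2}\qquad\text{for all }x\in U.
\]
The aim is to bound $d\bigl(f(x),f(x_{T_0}^{a})\bigr)$ for $x$ close to $a$ and then invoke the triangle inequality. Since $x_{T_0}^{a}$ arises from $x$ by resetting the finitely many coordinates $t_1,\dots,t_n$ to $a_{t_1},\dots,a_{t_n}$, I would argue by telescoping: put $S_0=\emptyset$, $S_j=\{t_1,\dots,t_j\}$ and $x^{(j)}=x_{S_j}^{a}$, so that $x^{(0)}=x$, $x^{(n)}=x_{T_0}^{a}$ and $x^{(j)}=\bigl(x^{(j-1)}\bigr)_{t_j}^{a}$. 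Then
\[
d\bigl(f(x),f(x_{T_0}^{a})\bigr)\le\sum_{j=1}^{n}d\Bigl(f\bigl(x^{(j-1)}\bigr),f\bigl((x^{(j-1)})_{t_j}^{a}\bigr)\Bigr),
\]
and each summand is exactly of the form governed by the strong separate continuity of $f$ at $a$ in the variable $t_j$.

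The single genuine difficulty is that strong separate continuity in the variable $t_j$ controls $d\bigl(f(z),f(z_{t_j}^{a})\bigr)$ only for $z$ in some $\tau$-neighborhood $W_j$ of $a$, whereas in the sum the argument is the shifted point $x^{(j-1)}$ rather than $x$ itself. To overcome this I would first shrink each $W_j$ to a \emph{basic} Tychonoff neighborhood, determined by finitely many coordinate constraints $z_s\in V_s$ ($s\in F_j$, $a_s\in V_s$), on which $d\bigl(f(z),f(z_{t_j}^{a})\bigr)<\varepsilon/(2n)$. The key observation is that such a basic neighborhood is stable under resetting coordinates to their $a$-values: if $z\in W_j$ and $S\subseteq T$, then every constrained coordinate of $z_S^{a}$ equals either $a_s$ or $z_s$, both of which lie in $V_s$, so $z_S^{a}\in W_j$; moreover $z_S^{a}\in E$ because $E$ is $\mathcal S$-open and $z_S^{a}$ differs from $z$ in only finitely many places. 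Consequently, setting $V=U\cap\bigcap_{j=1}^{n}W_j$ and taking any $x\in V$, each intermediate point $x^{(j-1)}=x_{S_{j-1}}^{a}$ still lies in $W_j$, so every summand above is below $\varepsilon/(2n)$ and the whole sum is below $\varepsilon/2$. Adding the estimate for $d\bigl(f(a),f(x_{T_0}^{a})\bigr)$ yields $d\bigl(f(a),f(x)\bigr)<\varepsilon$ for all $x\in V$, which is continuity at $a$. I expect this coordinate-reset stability of basic neighborhoods (together with the verification that the shifted points stay in $E$) to be the main point to pin down; everything else is routine triangle-inequality bookkeeping.
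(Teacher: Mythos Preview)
Your proposal is correct and follows essentially the same route as the paper: the necessity is dispatched with $T_0=\emptyset$, and the sufficiency uses the same telescoping $x=x^{(0)},x^{(1)},\dots,x^{(n)}=x_{T_0}^{a}$ together with strong separate continuity in each variable $t_j$, the key point being the stability of basic Tychonoff neighborhoods under resetting coordinates to their $a$-values. The only cosmetic difference is that the paper first chooses arbitrary neighborhoods $V_k$ and then a single basic $W\subseteq U\cap\bigcap_k V_k$, whereas you take each $W_j$ basic from the start; your explicit verification that the intermediate points $x^{(j-1)}$ remain in $E$ (via $\mathcal S$-openness) is a detail the paper leaves implicit.
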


\begin{proof} {\it Necessity}. Suppose $f$ is continuous at the point $a$ and $\varepsilon>0$. Take a basic neighborhood $U$ of $a$ such that $d(f(x),f(a))<\varepsilon$ for all $x\in U$ and put $T_0=\emptyset$. Then $x_{T_0}^a=x$, which implies condition (1).

{\it Sufficiency}.  Fix $\varepsilon>0$. Using the condition of the theorem we take a finite set $T_0\subseteq T$ and a neighborhood $U$ of $a$ in $(E,\tau)$ such that $$
  d(f(a),f(x_{T_0}^{a}))<\frac{\varepsilon}{2}
  $$
  for  every $x\in U$. If $T_0=\emptyset$, then  $d(f(x),f(a))<\varepsilon$ for all $x\in U$, which implies the continuity of $f$ at $a$. Now assume $T_0=\{t_1,\dots,t_n\}$.  Since $f$ is strongly separately continuous at $a$, for every $k=1,\dots,n$ we choose a neighborhood $V_k$ of the point $a$ such that
  $$
  d(f(x),f(x_{t_k}^{a}))<\frac{\varepsilon}{2n}
  $$
  for all $x\in V_k$. We take a basic neighborhood $W$ of $a$ such that
  $$
  W\subseteq U\cap\bigl(\bigcap\limits_{k=1}^n V_k\bigr).
  $$
Observe that $x_{\{t_1,\dots,t_k\}}^a\in W$ for every $k=1,\dots,n$ and  for every $x\in W$.  Then for all $x\in W$ we have
\begin{gather*}
  d(f(x),f(a))\le  d(f(x),f(x_{T_0}^a))+d(f(x_{T_0}^a),f(a))<\\ <d(f(x),f(x_{\{t_1\}}^a))+d(f(x_{\{t_1\}}^a),f(x_{\{t_1,t_2\}}^a))+\dots+\\+d(f(x_{\{t_1,\dots,t_{n-1}\}}^a),f(x_{\{t_1,\dots,t_n\}}^a))+\frac{\varepsilon}{2}<\frac{\varepsilon}{2n}\cdot n+\frac{\varepsilon}{2}=\varepsilon.
\end{gather*}
Hence, $f$ is continuous at the point $a$.
\end{proof}

The following corollary generalizes the result of Dzagnidze~\cite[Theorem 2.1]{Dzagnidze}.

\begin{corollary}\label{cor:finite-dim-strong-cont}
  Let $E$ be an $\mathcal S$-open subset of a product $\prod\limits_{t\in T} X_t$ of topological spaces $X_t$, $|T|<\aleph_0$ and $(Y,d)$ be a metric space. Then any strongly separately continuous mapping \mbox{$f:(E,\tau)\to Y$} is continuous.
\end{corollary}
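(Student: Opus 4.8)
The plan is to deduce this directly from Theorem~\ref{th:suff-cond}, which characterizes continuity of a strongly separately continuous map at a point $a$ through condition~\eqref{eq:suff-cond}. Fix an arbitrary point $a=(a_t)_{t\in T}\in E$. Since $f$ is strongly separately continuous at $a$ by hypothesis, it suffices to verify condition~\eqref{eq:suff-cond} at $a$ for every $\varepsilon>0$ and then invoke the sufficiency direction of the theorem to conclude continuity at $a$.

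The key observation is that the finiteness of $T$ lets us take $T_0=T$ itself as the required finite subset of indices. With this choice, $x_{T_0}^{a}=x_{T}^{a}$ replaces \emph{every} coordinate of $x$ by the corresponding coordinate of $a$, so that $x_{T}^{a}=a$ for all $x\in E$. Consequently $d(f(a),f(x_{T_0}^{a}))=d(f(a),f(a))=0<\varepsilon$ for any prescribed $\varepsilon>0$, and this inequality holds for every $x\in E$; hence any neighborhood $U$ of $a$ (for instance $U=E$) witnesses the condition.

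Since condition~\eqref{eq:suff-cond} is thereby satisfied at each $a\in E$, Theorem~\ref{th:suff-cond} guarantees that $f$ is continuous at every point of $E$, i.e.\ $f$ is continuous on $(E,\tau)$. There is essentially no obstacle in this argument: its entire content is the remark that in the finite-index case the ``finite exceptional set'' $T_0$ appearing in~\eqref{eq:suff-cond} may be taken to be all of $T$, which collapses $x_{T_0}^{a}$ to the point $a$ and renders the defining inequality trivially true.
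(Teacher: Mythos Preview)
Your proof is correct and essentially identical to the paper's: both fix $a\in E$, set $T_0=T$ (legitimate since $|T|<\aleph_0$) and $U=E$, observe that $x_{T_0}^a=a$ so the required inequality becomes $d(f(a),f(a))=0<\varepsilon$, and then invoke Theorem~\ref{th:suff-cond}.
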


\begin{proof}
  Fix an arbitrary point $a\in E$ and a strongly separately continuous mapping \mbox{$f:(E,\tau)\to Y$}. For $\varepsilon>0$ we put $T_0=T$ and $U=E$. Then for all $x\in U$ we have $x_{T_0}^a=a$ and consequently
  $$
  d(f(a),f(x_{T_0}^a))=0<\varepsilon.
  $$
  Hence, $f$ is continuous at the point $a$ by Theorem~\ref{th:suff-cond}.
\end{proof}

The proposition below shows that Corollary~\ref{cor:finite-dim-strong-cont} is not valid for a product of  infinitely many topological spaces.

\begin{proposition}\label{prop:everywhere_discont}
  Let $X=\prod\limits_{t\in T}X_t$ be a product of topological spaces $X_t$, where $|X_t|>1$ for every $t\in T$, let $|T|> \aleph_0$ and $(Y,d)$ be a metric space with $|Y|>1$. Then there exists a strongly separately continuous everywhere discontinuous mapping $f:(X,\tau)\to Y$.
\end{proposition}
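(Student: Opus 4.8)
The plan is to exploit the tension between the two topologies that the section has set up: by the observation recorded immediately after Theorem~\ref{prop:strong-top-s}, any function that is constant on each $\mathcal S$-component of $X$ is automatically strongly separately continuous on $(X,\tau)$, whereas Proposition~\ref{prop:simple_prop_s-open}(\ref{s_open_dense}) tells us that every nonempty $\mathcal S$-open set is dense in $(X,\tau)$. A two-valued function splitting $X$ into two nonempty $\mathcal S$-open pieces will therefore be strongly separately continuous and, because both level sets are $\tau$-dense, discontinuous at every point.

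First I would fix a base point $x_0\in X$ and put $A=\sigma(x_0)$, the $\mathcal S$-component of $x_0$. Since $T$ is infinite (being of cardinality $>\aleph_0$) and $|X_t|>1$ for each $t$, I can select a countable set $\{t_1,t_2,\dots\}\subseteq T$ and a point $x_1\in X$ with $(x_1)_{t_k}\ne(x_0)_{t_k}$ for all $k$ and $(x_1)_t=(x_0)_t$ otherwise; then $x_1$ differs from $x_0$ in infinitely many coordinates, so $x_1\notin\sigma(x_0)$ and hence $A\ne X$. By Proposition~\ref{prop:simple_prop_s-open}(\ref{S-Clopen}) the complement $X\setminus A$ is again $\mathcal S$-open, so $A$ and $X\setminus A$ are two nonempty $\mathcal S$-open sets, and by part~(\ref{s_open_dense}) each of them is dense in $(X,\tau)$.

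Next, using $|Y|>1$, I fix distinct points $y_0,y_1\in Y$ and set $\delta=d(y_0,y_1)>0$. Define $f\colon X\to Y$ by $f(x)=y_0$ for $x\in A$ and $f(x)=y_1$ for $x\in X\setminus A$. As $A$ and $X\setminus A$ are clopen in $(X,\mathcal S)$ and $f$ is constant on each, the map $f\colon(X,\mathcal S)\to Y$ is continuous, whence Theorem~\ref{prop:strong-top-s} gives that $f\colon(X,\tau)\to Y$ is strongly separately continuous. For discontinuity, fix any $a\in X$ and any $\tau$-neighborhood $U$ of $a$: since $A$ and $X\setminus A$ are both $\tau$-dense, $U$ meets each, so $f(U)$ contains both $y_0$ and $y_1$ and therefore is not contained in the ball of radius $\delta/2$ about $f(a)$. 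Hence $f$ is discontinuous at $a$, and as $a$ was arbitrary, $f$ is everywhere discontinuous.

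I expect no serious obstacle here; the machinery of Theorem~\ref{prop:strong-top-s} and the density statement do all the work. The only point demanding a line of care is producing a genuine second $\mathcal S$-component, that is, verifying $\sigma(x_0)\ne X$, which is precisely where the infinitude of $T$ enters and which is already reflected in Proposition~\ref{prop:simple_prop_s-open}(\ref{non-trivial}). I note in passing that the construction uses only that $T$ is infinite, so the hypothesis $|T|>\aleph_0$ is in fact more than enough.
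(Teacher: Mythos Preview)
Your proposal is correct and follows essentially the same construction as the paper: define a two-valued map constant on $\sigma(x_0)$ and on its complement, invoke Theorem~\ref{prop:strong-top-s} for strong separate continuity, and use the $\tau$-density of nonempty $\mathcal S$-open sets (Proposition~\ref{prop:simple_prop_s-open}(\ref{s_open_dense})) for everywhere discontinuity. Your explicit construction of $x_1\notin\sigma(x_0)$ merely spells out what the paper cites as Proposition~\ref{prop:simple_prop_s-open}(\ref{non-trivial}), and your closing remark that only $|T|\ge\aleph_0$ is needed is a fair observation.
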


\begin{proof}
  Fix $x_0\in X$ and $y_1,y_2\in Y$, $y_1\ne y_2$. According to Proposition~\ref{prop:simple_prop_s-open}(\ref{non-trivial}), $\sigma(x_0)\ne\emptyset\ne X\setminus\sigma(x_0)$.  Set $f(x)=y_1$ if $x\in\sigma(x_0)$ and $f(x)=y_2$ if $x\in X\setminus \sigma(x_0)$. We prove that $f$ is everywhere discontinuous on $X$. Indeed, let $a\in X$  and $f(a)=y_1$. Take an open neighborhood $V$ of $y_1$ such that $y_2\not\in V$. If $U$ is an arbitrary neighborhood of $a$ in $(X,\tau)$, then there is $x\in U\setminus\sigma(x_0)$ by Proposition~\ref{prop:simple_prop_s-open}~(\ref{s_open_dense}). Then $f(x)=y_2\not\in V$. Therefore, $f$ is discontinuous at $a$. Similarly one can show that $f$ is discontinuous at $a$ in the case $f(a)=y_2$.

   Since the set $\sigma(x_0)$ is clopen in $(X,\mathcal S)$, the mapping $f:(X,\mathcal S)\to Y$ is continuous. It remains to apply Theorem~\ref{prop:strong-top-s}.
\end{proof}

\section{Baire measurable strongly separately continuous functions}

Let $B_0(X,Y)$ be a collection of all continuous mappings $f:X\to Y$. Assume that the classes $B_\xi(X,Y)$ are already defined for all $0\le \xi<\alpha$, where $\alpha<\omega_1$. Then $f:X\to Y$ is {said to be} {\it of the $\alpha$-th Baire class}, $f\in B_\alpha(X,Y)$, if $f$ is a pointwise limit of a sequence of mappings $f_n\in B_{\xi_n}(X,Y)$, where $\xi_n<\alpha$. Denote
$$
\mathcal B(X,Y)=\bigcup\limits_{0\le\alpha<\omega_1}B_\alpha(X,Y).
$$
 We say that $f:X\to Y$ {\it is a Baire measurable mapping}, if $f\in \mathcal B(X,Y)$.

Let $0\le\alpha<\omega_1$, $X$ be a metrizable space, $Y$ is a topological space and let $Z$ be  a locally convex space. W.~Rudin~\cite{Ru} proved that every mapping $f:X\times Y\to Z$, which is continuous with respect to the first variable and is of the $\alpha$-th Baire class with respect to the second one, belongs to the $(\alpha+1)$-th  Baire class on $X\times Y$. The following proposition is an easy corollary of the Rudin Theorem.

\begin{proposition}\label{prop:cor-Rudin}
  Let $n\in\mathbb N$, $X_1,\dots,X_n$ be metrizable spaces and $Z$ be a locally convex space. Then every separately continuous mapping $f:\prod\limits_{i=1}^n X_i\to Z$ belongs to the $(n-1)$-th Baire class.
\end{proposition}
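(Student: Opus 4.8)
The plan is to prove the statement by induction on $n$, using the Rudin Theorem quoted above as the engine of the inductive step. For $n=1$ there is nothing to prove: a separately continuous map $f\colon X_1\to Z$ is simply continuous, hence $f\in B_0\bigl(X_1,Z\bigr)$, which is precisely the $(n-1)$-th Baire class in this case.

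For the inductive step I would assume the claim for products of $n-1$ metrizable factors and take a separately continuous map $f\colon\prod_{i=1}^n X_i\to Z$. The idea is to regroup the product as $X_1\times Y$, where $Y=\prod_{i=2}^n X_i$ carries the Tychonoff topology and is therefore a topological space; note that metrizability of $Y$ is never needed, which is exactly why Rudin's theorem is the appropriate tool. Since $f$ is separately continuous, it is continuous with respect to the first variable $X_1$. For each fixed $x_1\in X_1$, the section $y\mapsto f(x_1,y)$ is separately continuous on $\prod_{i=2}^n X_i$, because varying a single coordinate $x_j$ with $j\ge 2$ while freezing the others coincides with varying that coordinate of $f$ itself, which is continuous by hypothesis. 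By the induction hypothesis each such section belongs to $B_{n-2}\bigl(\prod_{i=2}^n X_i,Z\bigr)$, so $f$ is of the $(n-2)$-th Baire class with respect to the second variable.

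It then remains to invoke the Rudin Theorem with $X=X_1$ (metrizable), the space $Y$ above, and $\alpha=n-2$: since $f$ is continuous in the first variable and of the $(n-2)$-th Baire class in the second, it belongs to the $(n-1)$-th Baire class on $X_1\times Y=\prod_{i=1}^n X_i$, which completes the induction.

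The argument is essentially pure bookkeeping, as all the analytic content is packaged inside the Rudin Theorem. The only points requiring genuine care are verifying that each section $f(x_1,\cdot)$ is again separately continuous, so that the induction hypothesis legitimately applies, and keeping the Baire-class indices aligned, with $\alpha=n-2$ feeding into $\alpha+1=n-1$. I do not anticipate any serious obstacle beyond this indexing.
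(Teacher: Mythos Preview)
Your proposal is correct and follows essentially the same inductive scheme as the paper's proof, with Rudin's theorem as the inductive engine. The only cosmetic difference is that the paper groups the product as $\bigl(\prod_{i=1}^{n-1}X_i\bigr)\times X_n$ rather than your $X_1\times\bigl(\prod_{i=2}^{n}X_i\bigr)$, so the roles of the ``continuous'' and ``Baire-class'' variables are swapped when Rudin's theorem is invoked; both groupings work equally well.
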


\begin{proof}
The assertion of the proposition is evident if $n=1$ and is exactly the Rudin Theorem if $n=2$. Now assume that the proposition is true for all $2\le k<n$ and prove it for $k=n$. Denote $X=\prod\limits_{i=1}^{n-1}X_i$. Then $f:X\times X_n\to Z$ belongs to the $(n-2)$-th Baire class with respect to the first variable by the inductive assumption and $f$ is continuous with respect to the second variable. Applying the Rudin Theorem we have $f\in B_{n-1}(X\times X_n,Z)$.
\end{proof}

The next result shows that the corollary of Rudin's Theorem is not valid for infinite products.

\begin{proposition}\label{ex:contr_Lebesgue}
  There exists a strongly separately continuous function $f:(\mathbb R^{\aleph_0},\tau)\to\mathbb R$ which is not Baire measurable.
\end{proposition}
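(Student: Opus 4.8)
The plan is to realize the desired $f$ as the indicator function of a suitable union of $\mathcal S$-components. Write $X=\mathbb R^{\aleph_0}$ and regard it as the Polish topological group $(\mathbb R^{\mathbb N},+)$, with $H=\sigma(0)=\{x\in X:x_n=0\text{ for all but finitely many }n\}$ a subgroup; the $\mathcal S$-components of $X$ are precisely the cosets $x+H$, and two points are $\mathcal S$-equivalent if and only if their difference lies in $H$. If $A\subseteq X$ is any union of $\mathcal S$-components, then $\chi_A$ is constant on every component, hence strongly separately continuous on $(X,\tau)$ by the remark following Theorem~\ref{prop:strong-top-s}. Since every Baire measurable function is Borel measurable and therefore has the Baire property, it suffices to produce a union $A$ of $\mathcal S$-components that fails to have the Baire property; then $\chi_A^{-1}((1/2,3/2))=A$ witnesses that $\chi_A$ is not Baire measurable.

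First I would prove a zero--one law: every $H$-invariant set $A\subseteq X$ with the Baire property is either meager or comeager in $(X,\tau)$. Fix the countable dense subgroup $Q=\bigoplus_{n}\mathbb Q\subseteq H$; since each translation $x\mapsto x+q$ is a homeomorphism of $(X,\tau)$ and $A+q=A$ for every $q\in Q$, a standard localization argument applies: if $A$ is non-meager and has the Baire property, then $A$ is comeager in some nonempty basic open set $U$, hence comeager in each $U+q$; as $Q$ is dense, the union $\bigcup_{q\in Q}(U+q)$ is dense open, and $A$ is comeager in it, so $A$ is comeager in $X$. Consequently it is enough to construct an $H$-invariant set $A$ for which both $A$ and $X\setminus A$ are non-meager, for then neither can be comeager and, by the zero--one law, $A$ cannot have the Baire property.

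The construction is by transfinite recursion of length $\mathfrak c$. There are exactly $\mathfrak c$ dense $G_\delta$ subsets of the second countable space $X$; enumerate them as $(P_\xi:\xi<\mathfrak c)$. The key lemma, and the point I expect to be the main obstacle, is that every comeager set $P\subseteq X$ meets $\mathfrak c$ many distinct $\mathcal S$-components. This is not the trivial counting argument available for the relation of eventual equality on $2^{\mathbb N}$, whose classes are countable, because here each component $x+H$ is itself of size $\mathfrak c$; instead I would build inside a dense $G_\delta$ contained in $P$ a Cantor scheme whose distinct branches are forced to differ in infinitely many coordinates, thereby producing $\mathfrak c$ pairwise $\mathcal S$-inequivalent points of $P$. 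Granting the lemma, at stage $\xi$ fewer than $\mathfrak c$ components have been committed, while $P_\xi$ meets $\mathfrak c$ of them, so I may choose two fresh components, each meeting $P_\xi$, assigning one to $A$ and one to $X\setminus A$. Taking $A$ to be the union of all components assigned to it together with the undecided ones then makes both $A$ and $X\setminus A$ meet every $P_\xi$, hence non-meager, which by the previous paragraph completes the proof.
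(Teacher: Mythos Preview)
Your approach is correct but takes a genuinely different and far more laborious route than the paper. The paper disposes of the proposition with a pure cardinality count: the partition of $\mathbb R^{\aleph_0}$ into $\mathcal S$-components has $\mathfrak c$ pieces, so the family $\mathcal F$ of all real-valued functions constant on each component has cardinality $2^{\mathfrak c}$; since $(\mathbb R^{\aleph_0},\tau)$ is separable, $|B_0(\mathbb R^{\aleph_0},\mathbb R)|=\mathfrak c$ and hence $|\mathcal B(\mathbb R^{\aleph_0},\mathbb R)|=\mathfrak c$, so some $f\in\mathcal F$ is not Baire measurable, and any such $f$ is strongly separately continuous by Theorem~\ref{prop:strong-top-s}. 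Your argument---a zero--one law for $H$-invariant sets with the Baire property, followed by a transfinite diagonalization over the $\mathfrak c$ dense $G_\delta$ sets, powered by a Mycielski-type lemma that every comeager set meets $\mathfrak c$ many $\mathcal S$-components---does go through (for the lemma, one can either invoke Mycielski's theorem after checking that the eventual-equality relation is meager in $X\times X$, or run the Cantor scheme so that at each level all $2^{n+1}$ nodes are separated in a fresh coordinate, forcing any two distinct branches to differ in infinitely many coordinates). What you gain is an explicit $\{0,1\}$-valued witness that even fails the Baire property, a slightly sharper conclusion; what the paper's argument buys is brevity and the complete avoidance of the lemma you correctly identified as the main obstacle.
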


\begin{proof}
Consider a partition $(\sigma_i:i\in I)$ of $\mathbb R^{\aleph_0}$ on $\mathcal S$-components $\sigma_i$. It is not hard to verify that $|I|=\mathfrak c$. Denote by $\mathcal F$ the collection of all functions $f:\mathbb R^{\aleph_0}\to\mathbb R$ such that $f|_{\sigma_i}={\rm const}$ for all $i\in I$. Then $|\mathcal F|=2^{|I|}=2^{\mathfrak c}$. Moreover, since $({\mathbb R}^{\aleph_0},\tau)$ is separable, $|B_0({\mathbb R}^{\aleph_0},\mathbb R)|=\mathfrak c$ and, consequently, $|\mathcal B(X,Y)|=\mathfrak c$. Hence, there exists $f\in\mathcal F\setminus\mathcal B({\mathbb R}^{\aleph_0},\mathbb R)$. Since for every $i\in I$ the set $\sigma_i$ is clopen in $({\mathbb R}^{\aleph_0},\mathcal S)$,  $f$ is continuous on $({\mathbb R}^{\aleph_0},\mathcal S)$. Then $f$ is strongly separately continuous on $({\mathbb R}^{\aleph_0},\tau)$ according to Proposition~\ref{prop:strong-top-s}.
\end{proof}

Let $1\le\alpha<\omega_1$. A mapping $f:X\to Y$ belongs to the {\it $\alpha$-th stable Baire class}, $f\in B_{\alpha}^d(X,Y)$, if there exists a sequence of mappings  $f_n\in B_{\alpha_n}(X,Y)$, where $\alpha_n<\alpha$, such that for every  $x\in X$ there exists $N\in\mathbb N$ such that $f_n(x)=f(x)$ for all $n\ge N$.

\begin{theorem}\label{prop:restriction_on_sigma}
Let $(X_n)_{n=1}^\infty$ be a sequence of topological spaces, $a\in \prod\limits_{n=1}^\infty X_n$, $E=\sigma(a)$   and let $f:(E,\tau)\to\mathbb R$ be a function.
\begin{enumerate}
  \item If $f$ is strongly separately continuous, then $f\in B_1^d(E,\mathbb R)$.

  \item If $f$ is separately continuous and  $X_n$ is metrizable for every $n\in\mathbb N$, then $f\in B_{\omega_0}^d(E,\mathbb R)$.
\end{enumerate}
\end{theorem}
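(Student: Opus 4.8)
The plan is to approximate $f$ by functions depending on only finitely many coordinates, exploiting the fact that every point of $\sigma(a)$ differs from $a$ in at most finitely many places. For each $n\in\mathbb N$ I would define $f_n:E\to\mathbb R$ by $f_n(x)=f(a_{\{1,\dots,n\}}^{x})$; since $a_{\{1,\dots,n\}}^{x}$ differs from $a$ in at most $n$ coordinates it belongs to $\sigma(a)=E$, so $f_n$ is well defined. The decisive feature is that this yields a \emph{stable} approximation: given $x\in E$, the set $\{t:x_t\ne a_t\}$ is finite, so for every $n$ exceeding its maximum one has $a_{\{1,\dots,n\}}^{x}=x$ and hence $f_n(x)=f(x)$. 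Thus $f_n\to f$ in the stable sense demanded by the definition of $B_\alpha^d$, and the only remaining task is to place each $f_n$ in the appropriate ordinary Baire class.

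The next step is to reduce each $f_n$ to a function on a finite product. Because $f_n(x)$ depends only on $x_1,\dots,x_n$, one can write $f_n=g_n\circ\pi_n$, where $\pi_n:E\to\prod_{i=1}^{n}X_i$ is the continuous projection onto the first $n$ coordinates and $g_n:\prod_{i=1}^{n}X_i\to\mathbb R$ is induced by the restriction of $f$ to the slice $L_n=\{y\in E:y_t=a_t\text{ for all }t>n\}$, which is canonically homeomorphic to $\prod_{i=1}^{n}X_i$. Since altering any one of the first $n$ coordinates of a point of $L_n$ keeps it inside $L_n$, the (strong) separate continuity of $f$ on $E$ passes, under this homeomorphism, to (strong) separate continuity of $g_n$ on the finite product $\prod_{i=1}^{n}X_i$. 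This transfer of the hypotheses through the identification $L_n\cong\prod_{i=1}^{n}X_i$ is the one step requiring real care, as one must check that each one-variable slice of $g_n$ arises from a one-variable slice of $f$ that stays inside $L_n$.

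For part (1), once $g_n$ is known to be strongly separately continuous, Corollary~\ref{cor:finite-dim-strong-cont} (the finite-dimensional case) shows that $g_n$ is continuous, whence $f_n=g_n\circ\pi_n$ is continuous, i.e. $f_n\in B_0(E,\mathbb R)$. Since the $f_n$ converge to $f$ stably and each has Baire class $0<1$, this is precisely the definition of $f\in B_1^d(E,\mathbb R)$.

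For part (2), assuming each $X_i$ is metrizable, every finite product $\prod_{i=1}^{n}X_i$ is metrizable, so Proposition~\ref{prop:cor-Rudin} applies to the separately continuous map $g_n$ and yields $g_n\in B_{n-1}(\prod_{i=1}^{n}X_i,\mathbb R)$; consequently $f_n\in B_{n-1}(E,\mathbb R)$. As each index satisfies $n-1<\omega_0$ and $f_n\to f$ stably, I would conclude $f\in B_{\omega_0}^d(E,\mathbb R)$. I expect the only genuine obstacle to be the coordinate-restriction argument of the second paragraph; the rest is bookkeeping together with the two quoted finite-dimensional theorems.
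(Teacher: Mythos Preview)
Your proposal is correct and is essentially the paper's own proof: your slices $L_n$ are the paper's $E_n=\prod_{i\le n}X_i\times\prod_{i>n}\{a_i\}$, your $f_n(x)=f(a_{\{1,\dots,n\}}^{x})$ coincides with the paper's approximant, and the two parts are concluded exactly via Corollary~\ref{cor:finite-dim-strong-cont} and Proposition~\ref{prop:cor-Rudin} respectively. The transfer-of-hypotheses step you flag as delicate is indeed routine (the paper leaves it implicit), and the passage from $g_n\in B_{n-1}$ to $f_n\in B_{n-1}$ via the continuous projection $\pi_n$ is the ``not hard to verify'' remark in the paper.
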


\begin{proof}
 For every $n\in\mathbb N$ we put
$$
E_n=\prod\limits_{i=1}^n X_i\times\prod\limits_{i=n+1}^\infty\{a_i\},
$$
$g_n=f|_{E_n}$ and
$$
f_n(x)=g_n(x_1,\dots,x_n,a_{n+1},\dots)
$$ for all $x\in E$.
Clearly, $E=\bigcup\limits_{n=1}^\infty E_n$, $E_n\subseteq E_{n+1}$ and every space $(E_n,\tau)$ is homeomorphic to $(\prod\limits_{i=1}^n X_i,\tau)$.

If $f$ is strongly separately continuous, then by Theorem~\ref{cor:finite-dim-strong-cont} every $g_n$ is continuous on $E_n$. Then $f_n:(E,\tau)\to\mathbb R$ is a continuous extension of $g_n$.

In the second case $g_n\in B_{n-1}(E_n,Z)$ by Proposition~\ref{prop:cor-Rudin} for every $n$. It is not hard to verify that $f_n\in B_{n-1}(E,Z)$.

Now if $x\in E$, then there is $N\in\mathbb N$ such that $x\in E_n$ for all $n\ge N$. Therefore, $f_n(x)=f(x)$ for all $n\ge N$. Hence, $f\in B_1^d(E,\mathbb R)$ in the first case and $f\in B_{\omega_0}^d(E,\mathbb R)$ in the second one.
\end{proof}

\begin{proposition}
  Let $a=(0,0,\dots)\in \mathbb R^{\aleph_0}$, $E=\sigma(a)\subseteq \mathbb R^{\aleph_0}$ and $Y=[0,1]$. Then there exists a separately continuous function $f:E\to Y$ such that $f\not\in \bigcup\limits_{n=1}^\infty B_n((E,\tau),Y)$.
\end{proposition}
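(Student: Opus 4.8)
The plan is to assemble $f$ out of finite-dimensional building blocks of strictly increasing Baire class, placed on disjoint groups of coordinates so that they cannot interfere with separate continuity. The key external input is the classical sharpness of Proposition~\ref{prop:cor-Rudin}: for every $k\in\mathbb N$ there is a separately continuous $\tilde\varphi_k:\mathbb R^k\to\mathbb R$ of Baire class exactly $k-1$, i.e. $\tilde\varphi_k\in B_{k-1}(\mathbb R^k,\mathbb R)\setminus B_{k-2}(\mathbb R^k,\mathbb R)$ (for $k=1$ take a continuous function; for $k=2$ one may take $\tilde\varphi_2(s,t)=st/(s^2+t^2)$ off the origin). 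Composing with a homeomorphism $h:\mathbb R\to(0,1)$ and using that $h$ and $h^{-1}$ are continuous, I get separately continuous $\varphi_k=h\circ\tilde\varphi_k:\mathbb R^k\to(0,1)$ that is still of Baire class exactly $k-1$.

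Next I would partition $\mathbb N$ into consecutive finite blocks $B_k$ with $|B_k|=k$, and for $x\in E$ write $x_{B_k}\in\mathbb R^k$ for the tuple of its coordinates indexed by $B_k$. Define
\[
f(x)=\sum_{k=1}^\infty 2^{-k}\varphi_k(x_{B_k}),\qquad x\in E.
\]
Since $0<\varphi_k<1$, the series converges absolutely and $f(x)\in(0,1)\subseteq Y$, so $f:E\to Y$ is well defined. To verify separate continuity at a point $b\in E$ with respect to the variable $t$, note that $t$ lies in a unique block $B_{k_0}$; every summand with $k\ne k_0$ is independent of the $t$-th coordinate and hence constant along $x\mapsto b_t^{x}$, while the summand $2^{-k_0}\varphi_{k_0}((b_t^{x})_{B_{k_0}})$ is continuous in $x$ because $\varphi_{k_0}$ is separately continuous. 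Thus $x\mapsto f(b_t^{x})$ is continuous, and $f$ is separately continuous on $E$.

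Finally I would show $f\notin\bigcup_{n}B_n(E,Y)$ by restricting to the subspaces $F_k=\{x\in E:x_i=0\text{ for all }i\notin B_k\}$, each homeomorphic via $x\mapsto x_{B_k}$ to $\mathbb R^k$. On $F_k$ every block other than $B_k$ contributes the constant $c_k=\sum_{j\ne k}2^{-j}\varphi_j(0)$ (origin of the relevant $\mathbb R^j$), so under the homeomorphism $F_k\cong\mathbb R^k$ the restriction $f|_{F_k}$ corresponds to $2^{-k}\varphi_k+c_k$. As $u\mapsto 2^{-k}u+c_k$ and its inverse are continuous, this function is of Baire class exactly $k-1$, just like $\varphi_k$. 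Using the standard fact that restriction to a subspace does not raise the Baire class, if $f$ were in $B_m(E,Y)$ then $f|_{F_{m+2}}$ would lie in $B_m(F_{m+2},Y)$, contradicting that it is of class exactly $m+1$. Since $m$ is arbitrary, $f$ lies in no finite Baire class.

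The main obstacle is the finite-dimensional input: producing, for every $k$, a separately continuous function on $\mathbb R^k$ of Baire class exactly $k-1$. Everything else (convergence of the series, separate continuity of the sum, and the restriction argument) is routine. This step is precisely the classical converse to Proposition~\ref{prop:cor-Rudin}, and it is where a self-contained account would need the genuine work.
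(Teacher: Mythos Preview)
Your argument is correct, and it follows the same overall strategy as the paper: form $f$ as a series $\sum 2^{-k}$ times finite-dimensional separately continuous building blocks of strictly increasing exact Baire class, then restrict to finite-dimensional slices to rule out every finite class. The implementation, however, is genuinely different. The paper places its $n$-th block on the \emph{overlapping} first $n{+}2$ coordinates and multiplies by a continuous cutoff $\varphi(x_{n+2})$ (vanishing only at $0$) so that on each slice $E_{n+2}=\mathbb R^{n+2}\times\{0\}\times\cdots$ the tail of the series drops out; the contradiction then requires subtracting the lower-order summands and invoking their known Baire classes. You instead put the $k$-th block on \emph{disjoint} coordinate groups $B_k$, which buys you two simplifications at once: separate continuity is immediate because each coordinate appears in exactly one summand, and the restriction to $F_k\cong\mathbb R^k$ turns every other summand into a constant, so no cutoffs and no subtraction are needed. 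Both proofs rest on the same nontrivial external ingredient---the classical converse to Proposition~\ref{prop:cor-Rudin} (Lebesgue) giving separately continuous functions on $\mathbb R^k$ of exact Baire class $k-1$---which you correctly flag as the one substantial input.
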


\begin{proof}
  For every $n\in\mathbb N$ we take a function $h_n\in B_{n+1}(\mathbb R, Y)\setminus B_{n}(\mathbb R, Y)$. According to the Lebesgue Theorem~\cite{Leb1} for every $n\in\mathbb N$ there exists a separately continuous function \mbox{$g_n:\mathbb R^{n+2}\to Y$} such that $$g_n(\mathop{\underbrace{x,x,\dots,x}}\limits_{n+2})=h_n(x)$$ for each $x\in\mathbb R$. Evidently, $g_n$ is not of the $n$-th Baire class on $\mathbb R^{n+2}$.

Let $\varphi:\mathbb R\to Y$ be any continuous function such that $\{0\}=\varphi^{-1}(0)$.
 For $n\in\mathbb N$ we consider a function $f_n:E\to Y$,
  $$
  f_n(x_1,\dots,x_n,\dots)=\varphi(x_{n+2})\cdot g_n(x_1,\dots,x_{n+2}).
  $$
  Then the function $f_n:E\to Y$ is separately continuous as the product of two separately continuous functions. Moreover,
  $$
  f_n|_{E_{n+2}}\not\in B_{n}(E_{n+2}, Y)
  $$
  for every $n\in\mathbb N$, where
  $$
  E_n=\mathbb R^n\times\{0\}\times\{0\}\times\dots.
  $$
  For every $x\in E$ we put
  $$
  f(x)=\sum\limits_{n=1}^\infty \frac{1}{2^n}f_n(x).
  $$

Observe that $f:E\to\mathbb R$ is separately continuous as the sum of the uniformly convergent series of separately continuous functions.

It remains to show that $f\not \in\bigcup\limits_{n=1}^\infty B_{n}(E,Y)$. Assume to the contrary that $f\in B_n(E,Y)$ for some $n\in\mathbb N$. Then $f|_{E_{n+2}}\in B_n(E_{n+2},Y)$. Notice that
$$
f|_{E_{n+2}}=\sum\limits_{k=1}^n \frac{1}{2^k}f_k|_{E_{n+2}},
$$
since $f_k|_{E_{n+2}}=0$ for all $k\ge n+1$. Denote
$$
g=\sum\limits_{k=1}^{n-1} \frac{1}{2^k}f_k|_{E_{n+2}}.
$$
Then we have $g\in B_n(E_{n+2},Y)$, since
$$
f_k|_{E_{n+2}}\in B_{k+1}(E_{n+2},Y)\subseteq B_n(E_{n+2},Y)
$$
for every $k=1,\dots,n-1$. Therefore,
$$
f_n|_{E_{n+2}}=(f|_{E_{n+2}}-g)\in B_n(E_{n+2},Y),
$$
which implies a contradiction.
\end{proof}

\section{Discontinuities of strongly separately continuous mappings}

For a mapping $f$ between spaces $X$ and $Y$ we denote the set of all points of continuity of $f$ by   $C(f)$ and let $D(f)=X\setminus C(f)$.

\begin{theorem}\label{th:inverse-for-normed}
Let $X=\prod\limits_{n=1}^\infty X_n$ be a product of normed spaces $(X_n,\|\cdot\|_n)$ and let $a\in X$. Then for any open set $G\subseteq (\sigma(a),\tau)$ there exists a strongly separately continuous function $f:(\sigma(a) ,\tau)\to \mathbb R$ such that $D(f)=G$.
\end{theorem}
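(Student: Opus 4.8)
The plan is to realize $f$ as a product $f=d\cdot w$, where $d$ is a genuinely continuous weight that localizes the discontinuities to $G$, and $w$ is a bounded, everywhere discontinuous, strongly separately continuous factor supplying the oscillation. The underlying point is that $(\sigma(a),\tau)$, being a subspace of the countable product $\prod_{n}X_n$ of metrizable spaces, is itself metrizable; fix a compatible metric $\rho$ on $\sigma(a)$. I will write $x_k^b$ for the point obtained from $x$ by snapping its $k$-th coordinate to $b_k$, as in the definition of strong separate continuity.

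First I would produce the oscillating factor. For each $n$ set $\theta_n(t)=\frac{\|t-a_n\|_n}{1+\|t-a_n\|_n}$, so $\theta_n\colon X_n\to[0,1)$ is continuous, $\theta_n(a_n)=0$, and $\sup_{t\in X_n}\theta_n(t)=1$ (not attained, since the nontrivial normed space $X_n$ is unbounded). Define $w(x)=\sup_n\theta_n(x_n)$. For $x\in\sigma(a)$ only finitely many coordinates differ from $a$, so $w(x)$ is a maximum of finitely many values and $0\le w(x)<1$. The key observation is that $w$ is strongly separately continuous: snapping the single coordinate $k$ alters exactly one entry of the supremum, whence $|w(x)-w(x_k^b)|\le|\theta_k(x_k)-\theta_k(b_k)|\to0$ as $x\to b$, by continuity of $\theta_k$. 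On the other hand $w$ is discontinuous at every point: given $b$ and a basic neighbourhood restricting the coordinates $1,\dots,M$, keep those coordinates at $b_n$ and push a single free coordinate $n_0>M$ out to where $\theta_{n_0}$ is close to $1$; letting $M\to\infty$ yields points $x\to b$ with $w(x)\to1$, so $\limsup_{x\to b}w(x)=1>w(b)$.

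Next I would introduce the localizing factor. Put $F=\sigma(a)\setminus G$ and $d(x)=\min\{1,\rho(x,F)\}$, with the convention $d\equiv1$ when $F=\varnothing$. Then $d\colon(\sigma(a),\tau)\to[0,1]$ is continuous with $d^{-1}(0)=F$ and $d^{-1}((0,1])=G$. Finally set $f=d\cdot w$. Strong separate continuity of $f$ follows by the splitting
\[
f(x)-f(x_k^b)=d(x)\bigl(w(x)-w(x_k^b)\bigr)+\bigl(d(x)-d(x_k^b)\bigr)w(x_k^b);
\]
the first summand tends to $0$ because $w$ is strongly separately continuous and $d$ is bounded, and the second because $d$ is continuous (both $x$ and $x_k^b$ converge to $b$, so $d(x)-d(x_k^b)\to0$) while $|w|\le1$.

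It remains to check $D(f)=G$. If $b\in F$ then $d(b)=0$ and $|f(x)|\le d(x)\to0=f(b)$, so $f$ is continuous at $b$ and $D(f)\subseteq G$. If $b\in G$ then $d(b)>0$ and $w(b)<1$; choosing $x^{(j)}\to b$ with $w(x^{(j)})\to1$ gives $f(x^{(j)})=d(x^{(j)})w(x^{(j)})\to d(b)>d(b)w(b)=f(b)$, so $f$ is discontinuous at $b$ and $G\subseteq D(f)$. I expect the main obstacle to be the simultaneous calibration of the two factors: one must guarantee that the oscillation of $w$ reaches a fixed height (here $1$) above its value at \emph{every} point, while each single-coordinate snap disturbs $w$ only slightly, and that this oscillation genuinely survives multiplication by the continuous weight $d$ rather than being smeared away, so that the discontinuities persist on all of $G$ and at no point of $F$ (in particular at no boundary point).
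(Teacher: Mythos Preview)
Your proof is correct and follows essentially the same strategy as the paper: factor $f$ as a continuous localizer (the distance to $F=\sigma(a)\setminus G$) times a bounded, everywhere discontinuous, strongly separately continuous function, and verify strong separate continuity via the same product-rule splitting. The only difference is the choice of the oscillating factor: the paper uses $g(x)=\exp\bigl(-\sum_n\|x_n\|_n\bigr)$ (which halves along a sequence that perturbs a single far coordinate), whereas you use $w(x)=\sup_n\theta_n(x_n)$ (which climbs toward $1$ along such a sequence); both work for the same reason, namely that altering one coordinate changes the value by a controlled amount depending only on that coordinate.
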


\begin{proof}
  Without loss of generality we may assume that $a=(0,0,\dots)$. For every $n\in\mathbb N$ we consider a norm $\|\cdot\|_n$ on the space $X_n$ which generates its topological structure and let $d$ be a bounded metric on $X$ which generates the Tychonoff topology $\tau$. Denote $X_0=(\sigma(a),\tau)$ and $F=X_0\setminus G$. For every $x=(x_n)_{n\in\mathbb N}\in X_0$ put
\begin{gather*}
   \varphi(x)=d(x,F),\,\,\mbox{if}\,\, F\ne\emptyset,\\
   \varphi(x)=1,\,\,\,\mbox{if}\,\, F=\emptyset,\\
   g(x)=\exp(-\sum\limits_{n=1}^\infty \|x_n\|_n),\\
   f(x)=\varphi(x)\cdot g(x).
\end{gather*}

We prove that $F\subseteq C(f)$. Indeed, if $x^0\in F$ and $(x^m)_{m=1}^\infty$ is a convergent to $x^0$ sequence in $X_0$, then $\lim\limits_{m\to\infty}\varphi(x^m)\cdot g(x^m)=0$, since $\lim\limits_{m\to\infty}\varphi(x^m)=\varphi(x^0)=0$ and $|g(x^m)|\le 1$ for every  $m$. Hence, $\lim\limits_{m\to\infty}f(x^m)=0=f(x^0)$.

Fix an arbitrary $x^0\in G$ and show that $x^0\in D(f)$. For every $m\in\mathbb N$ we choose $x_m\in X_m$ with $\|x_m\|_m=\ln 2+\|x_m^0\|_m$ and set $$
x^m=(x_1^0,x_2^0,\dots,x_{m-1}^0,x_m,x_{m+1}^0,\dots).
$$
 Clearly, $x^m\to x^0$ in $X_0$. For every $m\in\mathbb N$ we have
\begin{gather*}
  g(x^m)-g(x^0)=\exp(-\sum\limits_{n=1}^\infty\|x_n^0\|_n)(\exp(\sum\limits_{n=1}^\infty\|x_n^0\|_n-\sum\limits_{n=1}^\infty\|x_n^m\|_n)-1)=\\
  =g(x^0)(\exp(-\ln 2)-1)=-\frac 12 g(x^0).
\end{gather*}
Therefore, $g(x^m)=\frac 12 g(x^0)$ and
\begin{gather*}
  f(x^m)-f(x^0)=\varphi(x^m)g(x^m)-\varphi(x^0)g(x^0)= g(x^0)(\frac 12\varphi(x^m)-\varphi(x^0))
\end{gather*}
for all $m\in\mathbb N$.  Then
 $$
 \lim\limits_{m\to\infty}(f(x^m)-f(x^0))=-\frac 12 \varphi(x^0)\cdot g(x^0)<0.
 $$
Hence, $f$ is discontinuous at $x^0$. Consequently, $D(f)=G$.

It remains to check that $f$ is strongly separately continuous on $X_0$. Evidently, $f$ is strongly separately continuous on the set $C(f)=F$. Fix
 $x^0\in G$, $k\in\mathbb N$ and an arbitrary convergent to $x^0$ sequence $(x^m)_{m=1}^\infty$ in $X_0$. For every $m$ we put $y^m=(x^m)_{\{k\}}^{x^0}$. Since $G$ is open and $y^m\to x^0$, we may suppose that $x^m, y^m\in G$ for every $m$.  We note  that
 \begin{gather*}
   f(x^m)-f(y^m)=g(x^m)(\varphi(x^m)-\varphi(y^m)) +\varphi(y^m)(g(x^m)-g(y^m)) =\\=g(x^m)(\varphi(x^m)-\varphi(y^m))+\varphi(y^m)g(y^m)(\exp(\|x_k^0\|_k-\|x_k^m\|_k)-1).
 \end{gather*}
It follows from the inequality
 \begin{gather*}
   \exp(-\|x_k^0-x_k^m\|)\le \exp(\|x_k^0\|_k-\|x_k^m\|_k)\le\exp(\|x_k^0-x_k^m\|)
 \end{gather*}
that
 \begin{gather*}
   \lim\limits_{m\to\infty} (\exp(\|x_k^0\|_k-\|x_k^m\|_k)-1)=0.
 \end{gather*}
Taking into account that $\varphi$ and $g$ are bounded and
$\lim\limits_{m\to\infty}\varphi(x^m)=\lim\limits_{m\to\infty}\varphi(y^m)=\varphi(x^0)$, we obtain that
\begin{gather*}
  \lim\limits_{m\to\infty} (f(x^m)-f(y^m))=0.
\end{gather*}
Hence, $f$ is strongly separately continuous on~$X_0$.
\end{proof}

{\small
}

\end{document}